\theoremstyle{plain}
\newtheorem{theorem}{Theorem}[section]
\newtheorem{lemma}[theorem]{Lemma}
\newtheorem{prop}[theorem]{Proposition}
\theoremstyle{definition}
\newtheorem{rema}[theorem]{Remark}
\newtheorem{notation}[theorem]{Notation}
\newtheorem{defi}[theorem]{Definition}
\numberwithin{equation}{section}
\newcommand{\cC}{\mathcal C}
\newcommand{\cQ}{\mathcal Q}
\newcommand{\al}{\alpha}
\newcommand{\de}{\delta}
\newcommand{\la}{\lambda}
\newcommand{\si}{\sigma}
\newcommand{\RR}{\mathbb R}
\newcommand{\cqd}{\hfill $\qed$\\ \medskip}
\newcommand{\rar}{\rightarrow}
\newcommand{\ve}{\varepsilon}
\newcommand{\p}{\parallel}
\title[Critical Schr\"odinger-Newton]{Some a priori estimates
for a critical Schr\"odinger-Newton equation}
\author[Disconzi]{Marcelo M. Disconzi}
\address{Department of Mathematics\\
Vanderbilt University, Nashville, TN 37240, USA}
\email{marcelo.disconzi@vanderbilt.edu}
\begin{document}

\begin{abstract}
Under natural energy and decay assumptions, 
we derive a priori estimates for solutions
of a Schr\"odinger-Newton type of equation with critical exponent.
On one hand, such an equation generalizes the traditional 
Schr\"odinger-Newton and Choquard equations, while, on the other hand,
it is naturally related to problems involving scalar curvature and
conformal deformation of metrics.
\end{abstract}

\maketitle

%\tableofcontents

\section{Introduction. \label{intro}}

We shall study the behavior of positive solutions to the equation
\begin{gather}
\Delta u + \Big( \frac{1}{\,|x|^\ell}  * u^\frac{ 2n }{n-2} \Big) u^\frac{n+2}{n-2}
- V u = 0
\label{critical_SN_eq}
\end{gather}
in $\RR^n$, $n \geq 3$,  where $\Delta$ is the Euclidean Laplacian, $*$ means convolution, $\ell$ is 
a real number, and
$V$ is a smooth real valued function. Equation (\ref{critical_SN_eq}) will be referred to
as 
\emph{critical
Schr\"odinger-Newton equation}. We are concerned with a priori estimates, i.e., 
bounds on $u$ and its derivatives, which are necessary conditions 
for any (positive) solution of (\ref{critical_SN_eq}). Needless to say, not only 
are  a priori estimates one of the main tools 
 towards an existence theory for a 
given PDE, but also they reveal important features about the behavior of solutions.
 The form of the bounds one generally seeks to establish depends, of course, 
on specific characteristics of the equation, which in 
the case of (\ref{critical_SN_eq}), will be captured by suitable
hypotheses on  $\ell$, $V$ and asymptotic conditions for  $u$. 
In order to state natural assumptions for the critical  Schr\"odinger-Newton equation
as well as to highlight why one would consider (\ref{critical_SN_eq}) in the first place,
we first turn our attention to some related problems.

Recall that the Schr\"odinger-Newton equation is given by
\begin{gather}
i \hbar \frac{ \partial \Psi}{\partial t} = -\frac{\hbar^2}{2 m} \Delta \Psi
- Gm^2 \Big( \frac{1}{|x|} * |\Psi|^2 \Big ) \Psi, \, \,  x \in \RR^3,
\label{SN_eq}
\end{gather}
where $\Psi = \Psi(t,x)$ is a function on $\RR \times \RR^3$, and $m$, $\hbar$ 
and $G$ are constants. Physically, $\Psi$ is the wave-function 
of a self-gravitating quantum system of mass $m$ with gravitational 
interaction given by Newton's law of gravity; $\hbar$ and $G$
are Planck's and Newton's constants, respectively.
Equation (\ref{critical_SN_eq}) is obtained by considering the 
Schr\"odinger equation
\begin{gather}
i \hbar \frac{ \partial \Psi}{\partial t} = -\frac{\hbar^2}{2 m} \Delta \Psi
+  m U  \Psi
\label{Schrodinger_eq}
\end{gather}
with a Newtonian gravitational potential  $U$ that is sourced by a distribution
of mass given by $\Psi$ itself, 
\begin{gather}
\Delta U = 4 \pi G m |\Psi|^2.
\label{Poisson_eq}
\end{gather}
In other words, the mass distribution is given in probabilistic terms,
with its probability amplitude evolving according to the Schr\"odinger
equation, as is usual in quantum mechanical systems.
Writing $U$ in terms of the right hand side of  (\ref{Poisson_eq}) 
and the fundamental solution of the Laplacian, and using the resulting
expression into (\ref{Schrodinger_eq}), formally produces (\ref{SN_eq}).

The Schr\"odinger-Newton equation was first introduced by Ruffini
and Bonazzola in their study of equilibrium of self-gravitating 
bosons and spin-half fermions  \cite{RuffBo} and gained notoriety 
with Penrose's ideas about the role of gravity in the collapse of the 
wave function \cite{Penrose1, Penrose2}. More recently, it was used 
in discussions of semi-classical quantum gravity \cite{Carlip1,  Giu, Carlip2}.

Notice that the power of the convoluted term $\frac{1}{|x|}$ becomes $n-2$ in higher dimensions. 
Parallel to this situation, the following generalization of (\ref{SN_eq})
has been considered,
\begin{gather}
i \frac{ \partial \varphi}{\partial t} = \Delta \varphi
+ p \Big( \frac{1}{\, |x|^\ell}  * |\varphi|^p \Big ) \varphi |\varphi|^{p-2}, \, \, x \in \RR^n,
\label{Choquard_eq}
\end{gather}
where $p \geq 2$ and $\ell \in (0,n)$. For the remainder of the paper,  as in (\ref{Choquard_eq}), dimensional 
constants such as $\hbar$ and $G$ are
 set to one.
Equation (\ref{Choquard_eq}) is
used in certain approximating regimes of the Hartree-Fock theory for a 
one component plasma;
see e.g. \cite{Lieb1,Lieb2}.  In three spatial dimensions, with $\ell = 1$ and $p = 2$,
equation (\ref{Choquard_eq}) has been extensively studied, see
\cite{Barg,Caz, GiNir, Gini,  Lions1, Lions2, Rabi, ShaStrauss} and 
references therein.

As in many situations in Physics, one is particularly interested in 
wave-front-like solutions of the form $\varphi(t,x) = e^{i\omega t}u(x)$,
$\omega \in \RR$, which, 
upon plugging into (\ref{Choquard_eq}), leads to
\begin{gather}
\Delta u + \omega u 
+ p \Big( \frac{1}{\, |x|^\ell} * |u|^p \Big ) u | u|^{p-2} = 0.
\nonumber
\end{gather} 
 Considering the more general situation where $\omega \mapsto -V = -V(x)$ and
 dropping the factor $p$ in front of the convolution, we find
 \begin{gather}
\Delta u - V u 
+ \Big( \frac{1}{\, |x|^\ell} * |u|^p \Big ) u |u|^{p-2} = 0,
\label{stationary_Choquard_eq}
\end{gather} 
which is referred to as generalized non-linear Choquard equation.
A detailed study of  (\ref{stationary_Choquard_eq}), including
 existence results, has been recently carried out
by Ma and Zhao \cite{MaZhao};
Cingolani, Clapp  and Secchi \cite{Cing};  
Clapp and Salazar \cite{ClappSal}; and
Moroz and van
Schaftingen \cite{MorozSch}.
These works deal with the case where the exponent 
$p$ is sub-critical, i.e., $p < \frac{2n}{n-2}$, while the case $p=\frac{2n}{n-2}$
is called critical. Criticality 
is here understood in the usual sense of the Sobolev embedding theorems.
We recall that, roughly speaking, equations with sub-critical 
non-linearity are suited for treatment via calculus of variation 
techniques (see e.g. \cite{AubinBook}), provided
that the equation can be derived from an action principle --- which is 
the case for (\ref{stationary_Choquard_eq}), see the above references.

Equations with critical exponent appear in several situations in 
Physics and Mathematics (see e.g. \cite{AubinBook, C, Jost} and references therein).
One important case is the Yamabe equation 
\begin{gather}
\Delta_g u - \frac{n-2}{4(n-1)} R_g u + K u^\frac{n+2}{n-2} = 0, \, \, u > 0, 
\label{Yamabe_eq}
\end{gather}
where $\Delta_g$ and $R_g$ are, respectively, the Laplace-Beltrami
operator and the scalar curvature of a given metric $g$, and $K$ is a constant.
Equation (\ref{Yamabe_eq}) figures in the 
famous Yamabe problem \cite{Ya}. We recall that this
corresponds to finding a constant scalar curvature metric in the conformal 
class of a given closed\footnote{The Yamabe problem
for manifolds with boundary was studied in
\cite{A,Al,Al2,BC,Chen,DK_Yamabe,Es,Es2,Es3,FA,FA2,HL,HL2,M2,M3}.} $n$ dimensional
($n\geq 3$) Riemannian 
manifold. The complete solution of the Yamabe problem
through the works of Yamabe \cite{Ya}, Trudinger \cite{Tr}, Aubin
\cite{Au} and Schoen \cite{S1} 
was probably the first instance of a satisfactory existence theory for equations
 with critical 
non-linearity (see
\cite{LP} for a complete overview). The analogous equation for the Euclidean metric 
was studied in great detail by Caffarelli, Gidas and Spruck
\cite{CGS}.

Interest in the Yamabe problem has not faded with its resolution. On the 
contrary, the discovery of Pollack, that it is possible to find an
arbitrary  large number of solutions to the Yamabe equation
on manifolds with positive Yamabe invariant,  has led  to an intensive investigation of the properties of 
the space $\Phi$ of solutions 
to (\ref{Yamabe_eq}) --- see
\cite{LP} for 
a definition of the Yamabe invariant and \cite{Po} for a precise statement of
Pollack's result. A quite satisfactory account of the topology of $\Phi$
was given through the combined works of Khuri, Marques and Schoen \cite{KMS};
Brendle \cite{Br}; and Brendle and Marques \cite{BM} 
(see also \cite{Dr, LZh1,LZh2, M, S0, SZ} for earlier results). These 
results imply that $\Phi$ is compact in the $C^{2,\al}$ topology for $n\leq 24$
and non-compact otherwise\footnote{This under the assumption that the Yamabe invariant
is positive, and the underlying manifold is not conformally equivalent
to the round sphere. The cases of negative and zero Yamabe invariant are trivial. The 
geometric reasons for singling out the sphere, and the relation between the 
compactness of $\Phi$ and the geometry of the manifold, are discussed in
\cite{BM2, Ob}.}. Such results were extended to manifolds with boundary 
in \cite{DK_Yamabe}.

From an analytic perspective, the richness surrounding equation (\ref{Yamabe_eq}),
including the surprising cut-off in dimension $n=24$, is a direct consequence
of the critical exponent. It should be expected, therefore, that allowing
$p = \frac{2n}{n-2}$  in (\ref{Choquard_eq}) will lead to many interesting new phenomena, 
adding
to the already sophisticated nature of the generalized non-linear 
Choquard equation. 
A contribution in this direction is the goal of the present work.
In order not to lose sight of the relation between what has been just described
and our objectives in the rest of the manuscript, notice that from the point of view of the theory 
of partial 
differential equations, the aforementioned compactness of $\Phi$ 
corresponds to a priori bounds for solutions 
of (\ref{Yamabe_eq}). The reader should also notice the similarities 
between (\ref{critical_SN_eq}) and (\ref{Yamabe_eq}), specially if we are given a metric
$g$ in $\RR^n$, with $\Delta_g$ replacing $\Delta$ and 
$V$ being the scalar curvature.

We shall present a priori estimates for positive solutions to (\ref{critical_SN_eq}).
Such estimates constitute the first step towards an existence theory for 
this equation. They also provide insight on the structure of the space of solutions
to (\ref{critical_SN_eq}), at least for those solutions satisfying some additional 
requirements. We also give an account of the profile of blowing-up solutions.
We point out that since, to the best of our knowledge, equation (\ref{critical_SN_eq})
has not been considered before in the literature, we shall not attempt 
to derive very general results; rather, our focus will be on conditions that allow, on one hand,
a good grasp on the behavior of $u$ without, on the other hand, rendering the
problem uninteresting. We also stress that our methods may shed new light in the 
study of equations (\ref{SN_eq}) and (\ref{Choquard_eq}), in that we investigate the pointwise
behavior of solutions as opposed to the $L^2$ techniques previously employed to deal with 
these equations.

\section{Setting and statement of the results}

\begin{notation}
From now on, $u$ will denote a positive solution of (\ref{critical_SN_eq}).
\end{notation}

The first thing we investigate is
the range of $\ell$ values which will be allowed. For the integral
\begin{gather}
\frac{1}{\, |x|^\ell}* u^\frac{ 2n }{n-2} 
= \int_{\RR^n}\frac{1}{\, |y|^\ell} \, u^\frac{2n}{n-2}(x-y) \, dy
\label{convolution}
\end{gather}
to be finite near the origin without expecting any vanishing of $u$ in the neighborhood
of zero, we must have $\ell \in (0,n)$. Next, we ask 
 what kind of asymptotic behavior should be
required for $u$. Experience with equations with critical exponent 
\cite{CGS,LP} suggests that we should adopt
\begin{gather}
u = O(|x|^{2-n}) \, \text{ as } \ |x| \rar \infty. 
\label{asymptotic}
\end{gather}
Then (\ref{asymptotic}) and $\ell \in (0,n)$ guarantee that the integral
(\ref{convolution}) is finite.

\begin{defi} Given real numbers $\varrho > 0$ and $L > 0$, we say that 
$u$ has $(\varrho, L)$-\emph{decay}  if it satisfies
\begin{gather}
u(x) \leq L |x|^{2-n} , \, \text{ for } |x| \geq \varrho.
\nonumber
\end{gather}
Denote by $\cC_{\varrho,L}$ the set of solutions $u$ with $(\varrho, L)$-decay.

\end{defi}
We shall also need some energy conditions. In order to motivate them, multiply
(\ref{critical_SN_eq}) by $u$, integrate by parts, and assume that all the integrals are finite.
Then
\begin{gather}
\int_{\RR^n} \big ( |\nabla u |^2 +V u^2 \big ) = 
\int_{\RR^n} \int_{\RR^n} \frac{1}{\, |y|^\ell} \, u^\frac{2n}{n-2}(x-y) u^\frac{2n}{n-2} (x)\, dy \, dx.
\label{energy}
\end{gather}
The left-hand side is just the energy associated with the linear operator
$\Delta - V$. If we had a constant rather than $\frac{1}{ \, |x|^\ell} * u^\frac{ 2n }{n-2} $,
the above expression would produce the analogue of the Yamabe quotient for our equation.
This motivates the following.

\begin{defi}
We call the convolution $q_u(x):=\frac{1}{|x|^\ell} * u^\frac{ 2n }{n-2}$ the \emph{quotient
of $u$} (which is always non-negative). For a given real number $K>0$, denote 
by $\cQ_K$ the set of solutions $u$ whose quotient is less than
or equal to $K$. More precisely
\begin{gather}
\cQ_K := \Big \{ u \, \Big | \, q_u(x) \leq K \, \text{ for all } x \in \RR^n \Big \}.
\nonumber
\end{gather}
\end{defi}
\begin{rema}
Since $q_u$ is related to the energy on the left-hand side of (\ref{energy}),
$u \in \cQ_K$ can be thought of as an energy-type of condition.
This should not be confused, however, with the more physically appealing notion
of energy for (\ref{Choquard_eq}) used in 
\cite{Cing, ClappSal, MaZhao,  MorozSch}.
\end{rema}

In order to motivate the extra hypotheses that will be needed, we have  to say
a few words about the general situation that will be investigated. We shall 
employ blow-up techniques to study sequences $\{ u_i \}$ of $C^{2,\al}$ solutions
to (\ref{critical_SN_eq}),
and we are primarily concerned with the constraints that the equation imposes on 
blow-up up sequences.
Understanding how a sequence $\{u_i\}$ can be 
unbounded in, say, the $C^0$-norm, 
is important not only because such
families of solutions are obstructions to the application of standard compactness
arguments, but also because this type of behavior is expected
for many critical equations 
(see  \cite{Am1, Berti, CWY, Dr2, DrH, Esp1, Esp2,  HVa} and references
therein).

Whenever blow-up occurs, i.e., $\p u_i \p_{C^0(\RR^n)} \rar \infty$ as $i \rar \infty$,
 condition (\ref{asymptotic})
restricts the blow-up to within a compact set, in which case, we can 
assume $u_i$ to diverge along a sequence of points $x_i \rar \bar{x}$. An analysis of
the sequence $\{ u_i \}$ is carried out by rescaling the solutions
 and the coordinates, leading to an appropriate blow-up model for equation
 (\ref{critical_SN_eq}). In this situation, one expects that
 the blow up of $u_i$, together  with $\{ u_i \} \subset \cQ_K$, implies
 that the rescaled $q_{u_i}$'s are very close to a constant in the neighborhood 
 of $\bar{x}$. But in order to avoid substantial extra work that would 
 distract us from the main goals of the paper, we shall simply assume that $q_{u_i}$ has this
 desired property. Moreover, although our analysis will be local in nature, 
 to avoid the introduction of further cumbersome hypotheses, such an
 assumption will be taken to hold on a big compact set. The precise behavior 
 of $q_u$ has to be ultimately determined by a more refined analysis of the solutions
 to equation (\ref{critical_SN_eq}), what  is beyond the scope of this paper.
 A technical condition on the \emph{rate} at which $q_{u_i}$ approaches a constant
 will also be assumed, although probably this can be relaxed.
 With this in mind we now state our results, whose essence is that 
\emph{control over the convolution $q_u$ yields uniform 
 control over the solutions}. From elliptic theory, one would expect that 
 the required control on $q_u$ should be in the $C^{0,\al}$ topology, and that turns out
 to be in fact the case. 
 
 \begin{theorem}
 Fix  positive numbers $\varrho$, $L$ and $K$. Let 
 $u_i$ be a sequence of $C^{2,\al}$ positive 
 solutions to (\ref{critical_SN_eq}), $0 < \al < 1$,
satisfying
 \begin{gather}
 \{ u_i \}_{i=1}^\infty \subset \cC_{\varrho, L} \cap \cQ_K,
 \nonumber
 \end{gather}
 and suppose  that there exists a constant $Q$ such that
 \begin{gather}
 q_{u_i} \rar Q > 0\, \text{ in } \, C^{0,\al}(B_r(0)) \, \text{ as } \, i \rar \infty,
 \nonumber
 \end{gather}
 for some $r > \varrho$. Suppose further that $n\geq 6$. 
 
 If $\p u_i \p_{C^0(\RR^n)} \rar \infty$  and
 $ \p q_{u_i} - Q \p_{C^{0,\al}(B_r(0))} \p u_i \p_{C^0(\RR^n)}^{n-2} \rar 0$
 as $i \rar \infty$, then, up to a subsequence,
the following holds. There exist $\bar{x} \in B_{\varrho}(0)$, a sequence
$x_i \rar \bar{x}$ and a positive number $\si$  such that 
\begin{gather}
\p u_i \p_{C^0(\RR^n)} \, = u_i(x_i)
\nonumber
\end{gather}
and
\begin{gather}
\p  (u_i(x_i))^{-1} u - (u_i(x_i))^{-1} z_i 
\p_{C^0(B_\si(\bar{x} ) ) } \rar 0 \, \text{ as }\, i \rar \infty, 
\nonumber
\end{gather}
where 
\begin{gather}
z_i(x) := (u_i(x_i))^{-1} \Big ( (u_i(x_i))^{-\frac{4}{n-2}} - 
\frac{Q}{n(n-2)}|x - x_i|^2 \Big )^\frac{2-n}{2}.
\nonumber
\end{gather}
Furthermore, 
 the following estimate holds
 \begin{gather}
\p  (u_i(x_i))^{-1} u - (u_i(x_i))^{-1} z_i 
\p_{C^0(B_\si(\bar{x} ) ) }  \leq
C (u_i(x_i))^{-\frac{4}{n-2}},
\nonumber
 \end{gather}
 where $C=C(L,\varrho, K, Q, r, n, \al, \p V \p_{C^{0,\al}(B_r(0))} )$.
\label{C_0_theorem}
 \end{theorem}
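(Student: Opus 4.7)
The plan is a blow-up analysis along a maximum sequence in $B_\varrho(0)$, followed by a linearization/contradiction argument that delivers the sharp rate $\lambda_i^2 := M_i^{-4/(n-2)}$.

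Setting $M_i := \p u_i \p_{C^0(\RR^n)}$ and picking $x_i$ with $u_i(x_i) = M_i$, the decay $u_i(x) \leq L|x|^{2-n}$ on $\{|x| \geq \varrho\}$ forces $x_i \in B_\varrho(0)$ once $M_i$ is large, so a subsequence yields $x_i \to \bar x$. I then rescale by $\lambda_i := M_i^{-2/(n-2)} \to 0$ and set $v_i(y) := M_i^{-1} u_i(x_i + \lambda_i y)$, so that $v_i(0) = 1 = \p v_i \p_{C^0}$ and
\begin{gather*}
\Delta v_i + q_{u_i}(x_i + \lambda_i y) \, v_i^{(n+2)/(n-2)} = \lambda_i^2 \, V(x_i + \lambda_i y) \, v_i.
\end{gather*}
On any fixed compact $y$-set, $q_{u_i}(x_i + \lambda_i y) \to Q$ uniformly by hypothesis and $\lambda_i^2 V(\cdot) \to 0$ since $V$ is smooth, so standard $C^{2,\al}$ interior estimates furnish a $C^2_{\mathrm{loc}}$ subsequential limit $v_\infty \geq 0$ satisfying $v_\infty(0) = 1$ and $\Delta v_\infty + Q v_\infty^{(n+2)/(n-2)} = 0$ on $\RR^n$. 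The strong maximum principle gives $v_\infty > 0$, and the Caffarelli--Gidas--Spruck classification \cite{CGS} identifies
$v_\infty(y) = \bigl(1 + \tfrac{Q}{n(n-2)} |y|^2\bigr)^{-(n-2)/2}$;
undoing the rescaling matches the $z_i$ in the statement and produces the qualitative $C^0$ convergence.

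The quantitative rate I would extract by linearization. With $w_i := v_i - v_\infty$ and $p := (n+2)/(n-2)$, subtracting the two equations gives
\begin{gather*}
\Delta w_i + p Q \zeta_i^{p-1} w_i = \bigl(Q - q_{u_i}(x_i + \lambda_i y)\bigr) v_i^p + \lambda_i^2 V(x_i + \lambda_i y) v_i =: F_i,
\end{gather*}
with $\zeta_i$ between $v_i$ and $v_\infty$ pointwise. The first summand of $F_i$ is $o(\lambda_i^2)$ on fixed balls, because the hypothesis $\p q_{u_i} - Q \p_{C^{0,\al}} M_i^{n-2} \to 0$ forces $\p q_{u_i} - Q \p_{C^0} = o(M_i^{-(n-2)})$, which is much smaller than $\lambda_i^2 = M_i^{-4/(n-2)}$ for $n \geq 4$, so $\p F_i \p_{C^0(B_R)} \lesssim \lambda_i^2$. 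The bound $\p w_i \p_{C^0(B_R)} \leq C \lambda_i^2$ then follows by contradiction: if $\eta_i := \p w_i \p_{C^0(B_R)}/\lambda_i^2 \to \infty$, the normalization $\tilde w_i := w_i/(\eta_i \lambda_i^2)$ is bounded with vanishing right-hand side and extracts a $C^2_{\mathrm{loc}}$ limit $\tilde w_\infty \in \ker(\Delta + p Q v_\infty^{p-1})$, whose bounded kernel is spanned by the translations $\partial_j v_\infty$ and the dilation $y \cdot \nabla v_\infty + \tfrac{n-2}{2} v_\infty$; each is excluded by the pinning $v_i(0) = v_\infty(0) = 1$ (killing the dilation mode via $\tilde w_\infty(0) = 0$) and $\nabla v_i(0) = \nabla v_\infty(0) = 0$ (killing the translation modes via $\nabla \tilde w_\infty(0) = 0$), forcing $\tilde w_\infty \equiv 0$ and contradicting $\p \tilde w_i \p_{C^0(B_R)} = 1$.

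To pass from rescaled balls to the fixed ball $B_\sigma(\bar x)$, I split the domain: in the near region $|x - x_i| \lesssim R \lambda_i$ I quote the previous step, while in the far region the decay $u_i(x) \leq L|x|^{2-n}$ and the explicit form of $z_i$ yield $M_i^{-1} u_i = O(M_i^{-1})$ and $M_i^{-1} z_i = O(\lambda_i^{n-2})$ respectively. Both far-field quantities are majorized by $\lambda_i^2 = M_i^{-4/(n-2)}$ exactly when $4/(n-2) \leq 1$, i.e., $n \geq 6$, which is where the dimensional hypothesis enters. The main obstacle is the linearization step: one must both control $\Delta + p Q v_\infty^{p-1}$ off its $(n+1)$-dimensional kernel and propagate the contradiction argument to rescaled balls of radius $R_i \to \infty$ so that the near and far regimes inside $B_\sigma(\bar x)$ actually overlap, which will require choosing $\sigma$ small and likely supplementing the above with a weighted decay estimate on $w_i$ across the intermediate annulus.
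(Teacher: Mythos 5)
Your overall strategy is the same as the paper's: rescale along a maximum sequence, identify the blow-up limit via Caffarelli--Gidas--Spruck, then run a contradiction argument on the difference $v_i - Z$ normalized by its sup. However, the central technical step is missing, and the place you put it is precisely where you concede you ``will require... a weighted decay estimate.'' That is not a loose end --- it is the whole content of the estimate.

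The issue is that your contradiction argument lives on a \emph{fixed} ball $B_R$: you normalize $\tilde w_i := w_i/\|w_i\|_{C^0(B_R)}$, pass to a $C^2_{\mathrm{loc}}$ limit $\tilde w_\infty$, and invoke the structure of $\ker(\Delta + pQZ^{p-1})$ (translations and dilation, killed by the pinning $v_i(0)=1$, $\nabla v_i(0)=0$). But that kernel description is only valid for solutions on all of $\RR^n$ that are bounded, indeed decaying, at infinity; on a fixed $B_R$ there is no such rigidity. To get a genuine global limit you must normalize by the sup over the growing ball $|y|\le \ve_i^{-1}$, as the paper does with $A_i := \max_{|y|\le\ve_i^{-1}}|v_i - Z|$. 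But then the maximizing point $y_i$ may run off to infinity, and the pinning at $0$ is useless unless you can show $w_i$ actually decays in $y$ uniformly in $i$. The paper supplies exactly this by exhibiting the pointwise decay $|a_i(y)|\le C(1+|y|)^{-4}$ for the zero-order coefficient (coming from the Taylor expansion around $Z$ and the decay of $Z$) and then a Green's function representation yielding $|w_i(y)|\le C(1+|y|)^{-1} + C\ve_i^2/A_i$ for $|y|\le \ve_i^{-1}/2$. That estimate does three things at once: it gives compactness of $w_i$ on $\RR^n$ (not just on $B_R$), it shows the limit $w_\infty$ decays (hence the kernel argument applies and $w_\infty\equiv 0$), and it rules out $|y_i|\to\infty$ since $w_i(y_i)=1$ would be incompatible with the decay. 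Without it your argument does not close.

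A second, related problem is your far-region estimate. You argue that in the annulus $R\lambda_i\lesssim|x-x_i|\le\sigma$ you can bound $M_i^{-1}u_i$ using $u_i(x)\le L|x|^{2-n}$. But $\bar x\in B_\varrho(0)$ and $\sigma$ is small, so this annulus lies \emph{inside} $B_\varrho(0)$, where the $(\varrho,L)$-decay hypothesis gives you nothing. The paper only invokes the decay hypothesis when $|y|\ge\varrho\ve_i^{-1}$ (i.e.\ $|x-x_i|\ge\varrho$), and handles the remainder of the domain through the Green's function estimate described above. Your use of the decay hypothesis in the intermediate range is simply not licensed by the assumptions. The $n\ge 6$ constraint you correctly identify does enter in the regime $|y_i|\gtrsim \ve_i^{-1}$ (where $|y_i|^{(2-n)/2}\lesssim\ve_i^{(n-2)/2}\le\ve_i^2$ needs $n\ge 6$), but this only covers the very outer shell, not the annulus your splitting claims to handle.

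In short: the linearization set-up, the normalization $\lambda_i=M_i^{-2/(n-2)}$, the hypothesis bookkeeping for $\|q_{u_i}-Q\|$, and the appeal to CGS are all on target; but the decay estimate on the normalized difference across the annulus $1\ll|y|\lesssim\ve_i^{-1}$ is not an optional supplement --- it is the proof, and it is absent.
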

\begin{rema}
 The restriction to $n\geq 6$ is used to obtain the most direct proof without considering
 variations of (\ref{asymptotic}). We believe that this can be removed by a more careful
 application of the techniques here presented.
\end{rema} 
 
 Theorem \ref{C_0_theorem} 
identifies a blow-up model for 
 equation (\ref{critical_SN_eq}). In other words, it states that under
 suitable energy and decay conditions, and up to a subsequence, 
any family of solutions
 that blows up is approximated, after rescaling and 
 near the blow up point $\bar{x}$, by the radially 
 symmetric functions $z_i$. 
The following corollary says that if we also rescale the coordinates, then 
the $C^0$ convergence of theorem \ref{C_0_theorem} is 
improved to $C^2$ convergence.

\begin{theorem} Assume the same hypotheses and notation of theorem \ref{C_0_theorem}.
If $\p u_i \p_{C^0(\RR^n)} \rar \infty$  and
$ \p q_{u_i} - Q \p_{C^{0,\al}(B_r(0))} \p u_i \p_{C^0(\RR^n)}^{n-2} \rar 0$ 
 as $i \rar \infty$, letting $x_i$ and $\bar{x}$ be as in the conclusion of
 theorem \ref{C_0_theorem}, the following holds.
Define
\begin{gather}
v_i(y) := (u_i(x_i))^{-1} u_i(x_i + (u_i(x_i))^\frac{2}{2-n} y)
\nonumber
\end{gather}
and 
\begin{gather}
Z_i (y) := (u_i(x_i))^{-1} z_i(x_i + (u_i(x_i))^\frac{2}{2-n} y).
\nonumber
\end{gather}
Then  $Z_i(y) = (1 + \frac{Q}{n(n-2)}|y|^2)^\frac{2-n}{2} \equiv Z(y)$ for every $i$, and 
\begin{gather}
\p v_i - Z \p_{C^2(B_{\la_i} (0) ) } 
\rar 0 \, \text{ as }\, i \rar \infty,
\nonumber
\end{gather}
where $\la_i = (u_i(x_i))^\frac{2}{n-2} \eta$, with $\eta$ a small positive number.
Furthermore, 
 the following estimate holds
 \begin{gather}
\p v_i - Z \p_{C^2(B_{\la_i} (0) ) } \, \leq \, 
C (u_i(x_i))^{-\frac{4}{n-2}},
\nonumber
 \end{gather}
 where $C=C(L,\varrho, K, Q, r, n, \al, \p V \p_{C^{0,\al}(B_r(0))} )$.
\label{C_2_theorem}
\end{theorem}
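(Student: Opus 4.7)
My plan is to translate everything to the rescaled variable $y$, where the problem becomes fixed-scale, and then apply interior Schauder theory. The key observation is that the $C^0$ half of the statement is essentially a rewriting of Theorem~\ref{C_0_theorem}, while the $C^2$ half follows from standard elliptic regularity applied to the equation $v_i$ inherits from (\ref{critical_SN_eq}).

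First I would verify the identity $Z_i \equiv Z$ by a direct algebraic calculation. Setting $\mu_i := u_i(x_i)$ and substituting $x = x_i + \mu_i^{-2/(n-2)} y$ into the definition of $z_i$, one factors $\mu_i^{-4/(n-2)}$ out of the bracket; the outer exponent $(2-n)/2$ converts that factor into $\mu_i^{2}$, which cancels the two $\mu_i^{-1}$'s coming from the definitions of $z_i$ and $Z_i$, leaving an explicit $i$-independent expression that matches $Z$. Next, by the chain rule and (\ref{critical_SN_eq}), $v_i$ satisfies the rescaled equation
\begin{equation*}
\Delta v_i + \tilde{q}_i(y)\, v_i^{(n+2)/(n-2)} - \mu_i^{-4/(n-2)} \tilde{V}_i(y)\, v_i = 0,
\end{equation*}
where $\tilde{q}_i(y) := q_{u_i}(x_i + \mu_i^{-2/(n-2)} y)$ and $\tilde{V}_i(y) := V(x_i + \mu_i^{-2/(n-2)} y)$, and $Z$ solves the formal limit $\Delta Z + Q Z^{(n+2)/(n-2)} = 0$.

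Next I would transport the $C^0$ convergence of Theorem~\ref{C_0_theorem} to the rescaled picture. Pick $\eta < \sigma$ so small that $B_\eta(x_i) \subset B_\sigma(\bar{x})$ for large $i$. Since $y = \mu_i^{2/(n-2)}(x - x_i)$ sends $B_{\lambda_i}(0)$ onto $B_\eta(x_i)$,
\begin{equation*}
\|v_i - Z\|_{C^0(B_{\lambda_i}(0))} = \mu_i^{-1} \|u_i - z_i\|_{C^0(B_\eta(x_i))} \le \mu_i^{-1} \|u_i - z_i\|_{C^0(B_\sigma(\bar{x}))},
\end{equation*}
so both the convergence and the $O(\mu_i^{-4/(n-2)})$ bound transfer from Theorem~\ref{C_0_theorem}. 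To upgrade to $C^2$, subtract the limit equation from the equation for $v_i$, linearize the power nonlinearity by the mean value theorem, and arrive at a linear elliptic equation
\begin{equation*}
\Delta(v_i - Z) + a_i (v_i - Z) = (Q - \tilde{q}_i) v_i^{(n+2)/(n-2)} + \mu_i^{-4/(n-2)} \tilde{V}_i v_i,
\end{equation*}
whose coefficient $a_i$ is uniformly bounded thanks to the just-established $C^0$ convergence.

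Finally I would run standard interior Schauder estimates on fixed-radius balls $B_R(y_0) \subset B_{\lambda_i}(0)$:
\begin{equation*}
\|v_i - Z\|_{C^2(B_{R/2}(y_0))} \le C\bigl(\|v_i - Z\|_{C^0(B_R(y_0))} + \|Q - \tilde{q}_i\|_{C^{0,\alpha}(B_R(y_0))} + \mu_i^{-4/(n-2)}\bigr).
\end{equation*}
The first term is $O(\mu_i^{-4/(n-2)})$ by the previous step; the second, via pull-back of the H\"older seminorm (which only contracts under the rescaling), is dominated by $\|q_{u_i} - Q\|_{C^{0,\alpha}(B_r(0))}$, which by hypothesis is $o(\mu_i^{-(n-2)}) = o(\mu_i^{-4/(n-2)})$ when $n \ge 6$; and the third is already of the desired order. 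Taking the supremum over $y_0$ in a slightly interior region yields the claimed global $C^2$ bound on $B_{\lambda_i}(0)$. The main technical obstacle will be keeping the Schauder constant uniform even though $\lambda_i \to \infty$; this is what forces me to work on \emph{fixed-radius} subballs rather than on $B_{\lambda_i}(0)$ as a whole, and to track the fact that both $a_i$ and $\tilde{q}_i$ have uniformly bounded $C^{0,\alpha}$ norms on each such subball.
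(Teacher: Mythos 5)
Your proof is correct and takes essentially the same route as the paper's (very terse) argument: read the $C^0$ estimate off Proposition~\ref{main_prop} in the rescaled coordinates, then upgrade to $C^2$ via interior Schauder estimates after shrinking the ball, which is exactly what the paper means by ``standard elliptic estimates, after considering a smaller ball via the introduction of $\eta$.'' Your contribution is to spell out what the paper leaves implicit: the linear equation satisfied by $v_i - Z$, the role of the $(n-2)$-power hypothesis on $\p q_{u_i}-Q\p_{C^{0,\al}}$ in making that term $O(\ve_i^2)$ (which needs $(n-2)^2 \geq 4$, comfortably covered by $n \geq 6$), the contraction of the H\"older seminorm under rescaling, and the fact that Schauder must be applied on fixed-radius subballs to keep constants uniform as $\la_i \to \infty$. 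One small bookkeeping remark: the sign in front of $\frac{Q}{n(n-2)}|x-x_i|^2$ in the paper's definition of $z_i$ should be $+$ (as used in the identity $z_i(x)=\ve_i^{(2-n)/2}Z(\ve_i^{-1}(x-x_i))$ from the proof section); your algebraic reduction of $Z_i$ to $Z$ implicitly uses this corrected sign, which is the right reading.
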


The function $Z$ in theorem \ref{C_2_theorem} is well known in conformal geometry.
The metric on the sphere written in a coordinate chart via stereographic 
projection is conformal to the Euclidean metric, with 
the conformal factor being a multiple of $Z$. $Z$ has also 
been  extensively used in the study of the Yamabe
problem.

One naturally wonders about the boundedness of families of solutions to 
(\ref{critical_SN_eq}) as well as the possibility that sequences $u_i > 0$ degenerate
in the limit, i.e., points where $\lim_{i\rar \infty} u_i(x) = 0$.
The next theorem gives some sufficient conditions for bounds from above and below on 
$u$.

\begin{theorem}
Assume the same hypothesis of theorem \ref{C_0_theorem}, 
suppose that $V \neq 0$ and that it 
does not change sign in $B_r(0)$. A sufficient condition
for the existence of a constant $C=C(L,\varrho, K, Q, r, n, \al, \p V \p_{C^{0,\al}(B_r(0))} )$
such that the inequalities
\begin{gather}
\p u \p_{C^0(\RR^n)} + 
\p u \p_{C^{2,\al}(B_\varrho(0) )}  \, \leq \, C,
\nonumber
\end{gather}
and
\begin{gather}
\p u \p_{C^{0}(B_\varrho(0) )} \,  \geq \frac{1}{C}
\nonumber
\end{gather}
hold for any $u \in \cC_{\varrho, L} \cap \cQ_K$, is that
the inequality of theorem \ref{C_2_theorem} be true
in the $C^0$-norm with 
$(u_i(x_i))^{-\frac{4}{n-2} }$ replaced by $(u_i(x_i))^{-\frac{4}{n-2}  - \de}$, for some $\de >0$.
\label{a_priori_bound_theorem}
\end{theorem}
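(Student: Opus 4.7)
The plan is to argue by contradiction using a Pohozaev-type identity, with the extra $\delta$ decay furnishing the slack needed to overpower the boundary terms. Suppose no such $C$ exists. Passing to a subsequence, one has $\{u_i\}\subset\cC_{\varrho,L}\cap\cQ_K$ satisfying the hypotheses of Theorem \ref{C_0_theorem} with $M_i:=\p u_i\p_{C^0(\RR^n)}\to\infty$ (the case where only the lower bound fails, $M_i$ remaining bounded, is treated separately at the end). Theorems \ref{C_0_theorem} and \ref{C_2_theorem} then produce $\bar x\in B_\varrho(0)$, $x_i\to\bar x$ with $u_i(x_i)=M_i$, and the bubble profile $Z$, with the $\delta$-improvement $\p v_i - Z\p_{C^0(B_{\lambda_i}(0))} \leq C M_i^{-\frac{4}{n-2}-\delta}$ from the hypothesis.

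Multiplying equation (\ref{critical_SN_eq}) by $(x-x_i)\cdot\nabla u_i+\tfrac{n-2}{2}u_i$ and integrating over a small fixed ball $B_\rho(x_i)\subset B_r(0)$ yields the Pohozaev-type identity
\begin{gather*}
\int_{B_\rho(x_i)}\!\Bigl[-V u_i^2 + \tfrac{n-2}{2n}(x-x_i)\cdot\nabla q_{u_i}\,u_i^{\frac{2n}{n-2}} - \tfrac{1}{2}(x-x_i)\cdot\nabla V\,u_i^2\Bigr]dx \;=\; \cB_i(\rho),
\end{gather*}
with $\cB_i(\rho)$ the standard Pohozaev boundary integrand on $\partial B_\rho(x_i)$. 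Rescaling $y=(x-x_i)M_i^{2/(n-2)}$, using $v_i\to Z$ together with $\int_{\RR^n}Z^2<\infty$ (guaranteed by $n\geq 6 > 4$), gives $\int_{B_\rho(x_i)} V u_i^2\,dx = V(\bar x) M_i^{-\frac{4}{n-2}}\int_{\RR^n} Z^2\,dy + o(M_i^{-\frac{4}{n-2}})$. The $\nabla V$ term is $O(\rho\, M_i^{-\frac{4}{n-2}})$, subdominant for $\rho$ small; the non-local $\nabla q_{u_i}$ term (the delicate one, since only $C^{0,\alpha}$-convergence of $q_{u_i}$ is available) is handled by integration by parts transferring the gradient off $q_{u_i}$ onto $u_i^{2n/(n-2)}$ (controlled via the bubble profile), combined with the quantitative bound $\p q_{u_i}-Q\p_{C^{0,\alpha}}M_i^{n-2}\to 0$, giving $o(M_i^{-\frac{4}{n-2}})$.

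On $\partial B_\rho(x_i)$ the bubble asymptotics give $u_i,|\nabla u_i|=O(M_i^{-1})$, so $\cB_i(\rho)=O(M_i^{-2})$, and the $\delta$-improved rate is precisely what prevents the error $u_i-z_i$ from spoiling this. Since $n\geq 6$ gives $-\tfrac{4}{n-2}\geq -1>-2$, the bulk strictly dominates: dividing the Pohozaev identity by $M_i^{-\frac{4}{n-2}}$ and letting $i\to\infty$ forces $V(\bar x)\int_{\RR^n}Z^2=0$, contradicting that $V$ has fixed nonzero sign on $B_r(0)$. The $C^{2,\alpha}(B_\varrho(0))$ bound then follows from interior Schauder estimates for the now uniformly elliptic equation. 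For the lower bound, if $\inf_{B_\varrho(0)}u_i\to 0$ along some sequence, Harnack's inequality applied to the positive solution of $\Delta u+(q_u u^{4/(n-2)}-V)u=0$ (with uniformly bounded coefficients by the just-proved upper bound) gives $u_i\to 0$ uniformly on $B_{\varrho/2}(0)$; the strong maximum principle and unique continuation then force the $C^2_{\mathrm{loc}}$-limit $u_\infty$ to vanish identically on $\RR^n$, whence dominated convergence yields $q_{u_i}\to 0$ pointwise, contradicting $q_{u_i}\to Q>0$.

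The hard part is the treatment of the non-local term $\int (x-x_i)\cdot\nabla q_{u_i}\,u_i^{2n/(n-2)}$: since $q_{u_i}$ is only known to converge in $C^{0,\alpha}$, its gradient cannot be controlled directly from the hypotheses, and one must exploit the convolution structure $q_u = |x|^{-\ell}\ast u^{2n/(n-2)}$ together with the $\delta$-improved decay. The extra $\delta$ is exactly the quantitative margin required to push this non-local piece strictly below the leading order $M_i^{-\frac{4}{n-2}}$ set by $V$, which is why the sufficient condition is stated as a strict improvement over Theorem \ref{C_2_theorem}.
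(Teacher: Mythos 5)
Your proposal takes a genuinely different route from the paper. The paper does not use a Pohozaev identity at all. Instead, it evaluates the \emph{difference} of the equations for $u_i$ and $z_i$ \emph{at the single point} $x_i$: since $u_i(x_i)=z_i(x_i)$, the term $Q\ve_i^{(n-2)/2}\big(u_i^{\frac{n+2}{n-2}}-z_i^{\frac{n+2}{n-2}}\big)(x_i)$ vanishes identically; the $\delta$-improved $C^0$ bound, upgraded to a $C^2$ bound by Schauder, gives $\ve_i^{(n-2)/2}\Delta(u_i-z_i)(x_i)=O(\ve_i^{\delta})\to 0$; the term $(q_{u_i}-Q)(x_i)\,\ve_i^{-2}$ tends to zero by the hypothesis $\|q_{u_i}-Q\|_{C^{0,\al}}\|u_i\|_{C^0}^{n-2}\to 0$ together with $n\geq 6$; and yet the right-hand side is $\ve_i^{(n-2)/2}V(x_i)u_i(x_i)=V(x_i)\to V(\bar{x})\neq 0$, a contradiction. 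This pointwise evaluation completely sidesteps boundary integrals.

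Your Pohozaev argument, by contrast, has a genuine gap in its treatment of the boundary term $\cB_i(\rho)$. You assert $u_i,|\nabla u_i|=O(M_i^{-1})$ on $\partial B_\rho(x_i)$ and that the $\delta$-improved rate ``prevents the error $u_i-z_i$ from spoiling this.'' But the hypothesis gives only a \emph{uniform} bound $\|v_i-Z\|_{C^0(B_{\la_i}(0))}\leq C M_i^{-\frac{4}{n-2}-\delta}$, which translates to $|u_i-z_i|\leq C M_i^{1-\frac{4}{n-2}-\delta}$ on the fixed sphere $\partial B_\rho(x_i)$. To get $|u_i-z_i|=O(M_i^{-1})$ there, so that $\cB_i(\rho)=O(M_i^{-2})$, you would need $\delta\geq 2-\frac{4}{n-2}=\frac{2(n-4)}{n-2}$, and controlling $|\nabla(u_i-z_i)|$ similarly requires an even larger threshold $\delta\geq\frac{2n-6}{n-2}$. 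These are dimension-dependent lower bounds on $\delta$, not ``some $\delta>0$'' as the theorem allows. Even the weaker requirement that the boundary terms be $o(M_i^{-4/(n-2)})$ forces $\delta\geq\frac{n-4}{n-2}$, which is $\geq\frac{1}{2}$ for all $n\geq 6$. Without either a sharper, spatially decaying error estimate (a Harnack-type bound $u_i(x)\leq C M_i^{-1}|x-x_i|^{2-n}$, which is not established in this paper) or a pigeonhole choice of the radius $\rho$, the boundary integral is not under control. This is precisely the technical obstruction that the paper's pointwise argument was designed to avoid. Your treatment of the lower bound via Harnack, unique continuation, and the resulting contradiction $q_{u_i}\to 0$ versus $Q>0$ is in the same spirit as, and somewhat more explicit than, the paper's brief remark, and is fine.
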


\section{Proof of the theorems}

The hypotheses and notation 
of theorem \ref{C_0_theorem} will be assumed
throughout this section. Let $\{ u_i \}$ be such that 
 $\p u_i \p_{C^0(\RR^n)} \rar \infty$ as $i \rar \infty$.
Since $u$ has $(\varrho,L)$-decay, we can assume 
that for large $i$
\begin{gather}
 \p u_i \p_{C^0(\RR^n)} \, = \, \p u_i \p_{C^0(B_\varrho (0))}.
\nonumber
\end{gather}
Letting $x_i$ be such that $\p u_i \p_{C^0(\overline{B_\varrho(0)})} = u(x_i)$,
and passing to a subsequence we can assume that $x_i \rar \bar{x}$ for some
$\bar{x} \in \overline{B_\varrho(0)}$, and taking $i$ 
sufficiently large if necessary we can also suppose that $x_i$ and $\bar{x}$
are interior points. Define a sequence of real numbers $\{ \ve_i \}_{i=1}^\infty$ by
\begin{gather}
 \ve_i^\frac{2-n}{2} := u_i(x_i).
\nonumber
\end{gather}
Notice that $\ve_i \rar 0$ when $i \rar \infty$. $\ve_i$ measures the rate at which
$u_i$ blows-up. For each fixed $i$, consider the change of coordinates 
\begin{gather}
 y = \ve_i^{-1} (x - x_i),
\nonumber
\end{gather}
and define the rescaled functions 
\begin{gather}
 v_i (y) := \ve^\frac{n-2}{2} u_i( x_i + \ve_i y).
\nonumber
\end{gather}
Then $0$ is a local maximum for $v_i$ with $v_i(0) = 1$ and 
\begin{gather}
 0 < v_i \leq 1.
\label{v_bounded}
\end{gather}
A direct computation shows that $v_i$ satisfies
\begin{gather}
 \Delta v_i + \widetilde{q}_i v_i^\frac{n+2}{n-2} - \ve_i^2 \widetilde{V} v_i = 0,
\label{eq_v}
\end{gather}
where $\widetilde{q}_i(y) =  q_{u_i}(x_i + \ve_i y)$ and 
$\widetilde{V}(y) =  V(x_i + \ve_i y)$.
Let $z_i$ and $Z$ be as in theorems \ref{C_0_theorem} and \ref{C_2_theorem}.
In the sequel, we shall evoke several standard estimates of elliptic theory.
A full account of these results can be found in \cite{GT}.	

\begin{lemma}
With the above definitions, up to a subsequence, it holds that
$v_i \rar Z$ in $C^2_{loc}(\RR^n)$. 
\label{lemma_C_2_loc_v}
\end{lemma}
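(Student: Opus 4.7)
The plan is to extract, via interior elliptic estimates, a $C^2_{loc}$ limit $v$ of a subsequence of $\{v_i\}$, identify the limiting PDE as $\Delta v + Q v^{\frac{n+2}{n-2}} = 0$ on all of $\RR^n$, and then invoke the Caffarelli-Gidas-Spruck classification \cite{CGS} to conclude that $v = Z$. Since the limit turns out to be uniquely determined, the subsequential statement of the lemma will follow immediately.

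For the uniform bounds, fix $R > 0$. Because $x_i \to \bar{x} \in \overline{B_\varrho(0)}$ and $\ve_i \to 0$, for all $i$ sufficiently large one has $x_i + \ve_i y \in B_\varrho(0)$ whenever $y \in B_R(0)$, so by the choice of $x_i$,
\[
0 < v_i(y) \,\leq\, \ve_i^{\frac{n-2}{2}} u_i(x_i) \,=\, 1.
\]
The hypothesis $u_i \in \cQ_K$ gives $\widetilde q_i \leq K$, while $\ve_i^2 \widetilde V$ is uniformly bounded on $B_R(0)$ since $V \in C^{0,\alpha}(B_r(0))$ and $\ve_i \to 0$. Equation (\ref{eq_v}) then yields a uniform $L^\infty(B_R(0))$-bound on $\Delta v_i$, which interior $W^{2,p}$ and Schauder estimates upgrade to a uniform bound in $C^{2,\alpha}(B_{R/2}(0))$. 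Arzel\`a-Ascoli together with a diagonal argument extract a subsequence, still denoted $\{v_i\}$, converging in $C^2_{loc}(\RR^n)$ to some $v \in C^2(\RR^n)$ with $0 \leq v \leq 1$ and $v(0) = 1$.

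Next I would pass to the limit in (\ref{eq_v}). For $y$ in any fixed compact set, $x_i + \ve_i y \to \bar{x} \in B_r(0)$; together with $q_{u_i} \to Q$ in $C^{0,\alpha}(B_r(0))$ this yields $\widetilde q_i \to Q$ uniformly on that set, and $\ve_i^2 \widetilde V \to 0$ uniformly. The $C^2_{loc}$ convergence of $v_i$ therefore gives
\[
\Delta v + Q v^{\frac{n+2}{n-2}} = 0 \ \text{ on } \ \RR^n.
\]
Since $\Delta v \leq 0$, $v \geq 0$, and $v(0) = 1$, the strong maximum principle forces $v > 0$ on all of $\RR^n$.

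Finally, by the classification in \cite{CGS}, every positive $C^2$ solution of this conformal scalar curvature equation on $\RR^n$ has the form $v(y) = (a + b|y - y_0|^2)^{\frac{2-n}{2}}$ with $ab = Q/(n(n-2))$. The normalization $v(0) = 1$ together with $\nabla v(0) = 0$ --- which holds because $0$ is a local maximum of every $v_i$ and hence of the $C^2$-limit --- force $y_0 = 0$ and $a = 1$, so $b = Q/(n(n-2))$ and $v = Z$. The main obstacle in this plan is this last step: one must apply the Caffarelli-Gidas-Spruck classification to $v$ using only the qualitative information $0 < v \leq 1$ and $v(0) = 1$, with no a priori control on the decay of $v$ at infinity. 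This is also the step where the dimension restriction $n \geq 6$ enters implicitly (as noted in the remark following Theorem \ref{C_0_theorem}), allowing the most direct use of (\ref{asymptotic}) through the rescalings without needing refined versions of that decay assumption.
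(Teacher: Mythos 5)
Your argument is correct and follows essentially the same route as the paper: uniform $W^{2,p}$ bounds from $0 < v_i \leq 1$, $\widetilde q_i \leq K$ and the boundedness of $\ve_i^2\widetilde V$, upgrade to $C^{2,\alpha}$ via Schauder using the $C^{0,\alpha}$ control on $q_{u_i}$ and $V$, extraction of a $C^2_{loc}(\RR^n)$ limit by a diagonal argument, passage to the limit in (\ref{eq_v}) to obtain $\Delta v + Q v^{\frac{n+2}{n-2}} = 0$, and identification $v = Z$ via the Caffarelli--Gidas--Spruck classification together with $v(0)=1$ and $\nabla v(0)=0$. Your closing caveats are misplaced, however: CGS classify positive entire solutions of this equation \emph{without} any decay assumption at infinity (that is precisely one of their main results), so there is no obstacle in the last step; and the restriction $n \geq 6$ plays no role in this lemma --- in the paper it only enters later, in Proposition \ref{main_prop}, through the Taylor remainder $O(Z^{\frac{6-n}{n-2}}|v_i - Z|^2)$ in (\ref{Taylor}).
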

\begin{proof}
 Fix $R>0$ and $d>0$, and set $R^\prime = R+d$, $R^{\prime\prime} = R + 2d$.
  For any $p>1$, we have by $L^p$ estimates that
\begin{gather}
 \p v_i \p_{W^{2,p}(B_{R^\prime}(0))} \leq C \big ( \p v_i \p_{L^{p}(B_{R^{\prime\prime}}(0))} + 
\p \widetilde{q}_i v_i^\frac{n+2}{n-2} \p_{L^{p}(B_{R^{\prime\prime}}(0))} +  \ve_i^2 \p  \widetilde{V} v_i \p_{L^{p}(B_{R^{\prime\prime}}(0))} \big ),
\nonumber
\end{gather}
where $C=C(n,p,R^\prime, R^{\prime\prime} )$ and $W^{2,p}$ is the usual Sobolev space of functions with $2$ weak 
derivatives in $L^p$. From (\ref{v_bounded}) and our hypothesis it follows that 
\begin{gather}
\p v_i \p_{W^{2,p}(B_{R^\prime}(0))} \, \leq \, C(1 + \ve_i^2) \, \leq \, 2 C,  
\nonumber
\end{gather}
for large $i$, where $C = C(n,p,R^\prime,R^{\prime\prime},K, \p  \widetilde{V} \p_{C^0(B_{R^{\prime\prime}}(0))} )$. 
Choosing $p$ such that $2 > \frac{n}{p}$, we then obtain by the Sobolev embedding theorem that
$v_i$ is bounded in $C^{1,\al^\prime}(B_{R^\prime}(0))$, for some 
$0 < \al^\prime < 1$, therefore there exists a subsequence, 
still denoted $v_i$, which converges in $C^{1,\al}(B_{R^\prime}(0))$, $0 < \al < \al^\prime$, to a limit
$v_\infty$. 

Next, evoke Schauder estimates to obtain
\begin{gather}
 \p v_i \p_{C^{2,\al}(B_R(0))} \leq C \big ( \p v_i \p_{C^{0}(B_{R^\prime}(0))} + 
\p \widetilde{q}_i v_i^\frac{n+2}{n-2} \p_{C^{0,\al}(B_{R^\prime}(0))} 
+  \ve_i^2 \p  \widetilde{V} v_i \p_{C^{0,\al}(B_{R^\prime}(0))} \big ),
\nonumber
\end{gather}
where $C=C(n,\al,R,R^\prime)$. Combining this inequality, the interpolation inequality, the previous bound on
the $C^{1,\al}$-norm of $v_i$ and the hypotheses of theorem \ref{C_0_theorem}, we conclude that 
\begin{gather}
 \p v_i \p_{C^{2,\al}(B_R(0))}  \, \leq \, C,
\nonumber
\end{gather}
with $C = C(n,\al,p,R, d, K, Q, \p  \widetilde{V} \p_{C^0(B_{R^{\prime\prime}}(0))})$. As a consequence,
up to a subsequence and decreasing $\al$ if necessary, the above $C^{1,\al}$ convergence is in fact 
$C^{2,\al}$ convergence, and we can pass to the limit in 
(\ref{eq_v}) to conclude that $v_\infty$ satisfies 
\begin{gather}
 \Delta v_\infty + Q v_\infty^\frac{n+2}{n-2} = 0 \, \text{ in } \, B_R(0).
\nonumber
\end{gather}
Take now a sequence $R_j \rar \infty$. Using the above argument on each $B_{R_j}(0)$
along with a standard diagonal subsequence construction produces a subsequence
$v_i$ that  converges to a limit $v_\infty$ satisfying
\begin{gather}
 \Delta v_\infty + Q v_\infty^\frac{n+2}{n-2} = 0 \, \text{ in } \, \RR^n,
\label{eq_v_R_n}
\end{gather}
with the convergence being in $C^{2,\al}$ on each fixed $B_{R_j}(0)$.
Therefore $v \rar v_\infty$ in $C^{2,\al}_{loc}(\RR^n)$.
Solutions to (\ref{eq_v_R_n}) have been studied by Caffarelli, Gidas and Spruck 
in \cite{CGS}. From their results and the fact $v_\infty(0) =1$, we obtain
\begin{gather}
 v_\infty(y) = \Big (1+\frac{Q}{n(n-2)}|y| \Big )^\frac{2-n}{2} \equiv Z(y).
\nonumber
\end{gather}
\end{proof}

\begin{prop}
There exists a constants $C>0$, independent of $i$, such that 
\begin{gather}
 |v_i - Z|(y) \, \leq \, C \ve_i^{2} \, \text{ for every } \, 
 |y| \, \leq \ve_i^{-1},
\nonumber
\end{gather}
possibly after passing to a subsequence.
\label{main_prop}
\end{prop}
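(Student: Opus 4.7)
The plan is to set $w_i := v_i - Z$ and $\phi_i := \varepsilon_i^{-2} w_i$, and to prove a uniform bound $\|\phi_i\|_{C^0(B_{\varepsilon_i^{-1}}(0))} \le C$ by a rescaling/contradiction argument; the claimed estimate then follows immediately. Subtracting \eqref{eq_v} from $\Delta Z + QZ^{(n+2)/(n-2)} = 0$, splitting $\widetilde q_i v_i^{(n+2)/(n-2)} - QZ^{(n+2)/(n-2)} = (\widetilde q_i - Q)v_i^{(n+2)/(n-2)} + Q\bigl(v_i^{(n+2)/(n-2)} - Z^{(n+2)/(n-2)}\bigr)$, and applying the mean value theorem to $t \mapsto t^{(n+2)/(n-2)}$, yields
\begin{equation*}
\Delta \phi_i + Q\,\tfrac{n+2}{n-2}\,\xi_i^{4/(n-2)}\phi_i \;=\; F_i,
\end{equation*}
with $\xi_i$ between $v_i$ and $Z$, and $F_i := -\varepsilon_i^{-2}(\widetilde q_i - Q)v_i^{(n+2)/(n-2)} + \widetilde V v_i$. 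Since $\|u_i\|_{C^0} = \varepsilon_i^{(2-n)/2}$, the hypothesis $\|q_{u_i}-Q\|_{C^{0,\al}(B_r(0))}\|u_i\|_{C^0}^{n-2} \to 0$ translates into $\varepsilon_i^{-2}\|\widetilde q_i - Q\|_{C^0} \to 0$ when $n \ge 6$, so $F_i$ is uniformly bounded in $L^\infty$ by $\|V\|_{C^0(B_r(0))} + o(1)$ thanks to \eqref{v_bounded}.

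The second step is a boundary estimate at $|y| = \varepsilon_i^{-1}$ using $(\varrho,L)$-decay: at such $y$, $x := x_i + \varepsilon_i y$ satisfies $|x| \ge 1 - |x_i| > \varrho$ for $i$ large, so $u_i(x) \le L|x|^{2-n}$ translates into $v_i(y) \le C\varepsilon_i^{(n-2)/2}$, while $Z(y) \sim c\,|y|^{2-n} = c\,\varepsilon_i^{n-2}$. For $n \ge 6$ both contributions are $O(\varepsilon_i^2)$, giving $|\phi_i| \le C$ on $\partial B_{\varepsilon_i^{-1}}(0)$; this is precisely where the restriction $n \ge 6$ is used. Next I argue by contradiction. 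Set $M_i := \|\phi_i\|_{C^0(B_{\varepsilon_i^{-1}}(0))}$ and assume $M_i \to \infty$; the boundary bound forces the supremum to be attained at an interior point $y_i$. The normalization $\widetilde\phi_i := \phi_i/M_i$ satisfies the same linear equation with source $F_i/M_i \to 0$, has $|\widetilde\phi_i| \le 1$ with $|\widetilde\phi_i(y_i)| = 1$, and inherits from $v_i(0) = 1 = Z(0)$ and the maximality of $0$ for both $v_i$ and $Z$ the crucial pointwise identities $\widetilde\phi_i(0) = 0$ and $\nabla\widetilde\phi_i(0) = 0$.

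If $\{y_i\}$ stays bounded, the $L^p$ and Schauder estimates used in the proof of Lemma \ref{lemma_C_2_loc_v} produce a $C^{2,\al}_{loc}$ limit $\widetilde\phi_\infty$ on $\RR^n$, bounded by $1$, with $|\widetilde\phi_\infty(y_\infty)| = 1$ and
\begin{equation*}
\Delta \widetilde\phi_\infty + Q\,\tfrac{n+2}{n-2}\,Z^{4/(n-2)}\widetilde\phi_\infty = 0 \quad\text{on }\RR^n,
\end{equation*}
together with $\widetilde\phi_\infty(0) = 0$ and $\nabla\widetilde\phi_\infty(0) = 0$. By the classical characterization of bounded solutions to this linearization \cite{CGS}, the kernel is spanned by the Jacobi fields $\partial_j Z$ ($j=1,\ldots,n$) and $\tfrac{n-2}{2}Z + y\cdot\nabla Z$ arising from translations and dilations of the bubble. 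Writing $\widetilde\phi_\infty$ in this basis, the radial symmetry of $Z$ gives $\partial_j Z(0) = 0$ while $\bigl(\tfrac{n-2}{2}Z+y\cdot\nabla Z\bigr)(0) = \tfrac{n-2}{2} \ne 0$, and $\partial_k\partial_j Z(0) \propto \delta_{jk}$ while $\nabla\bigl(\tfrac{n-2}{2}Z+y\cdot\nabla Z\bigr)(0) = 0$; the two pointwise constraints therefore force every coefficient to vanish, so $\widetilde\phi_\infty \equiv 0$, contradicting $|\widetilde\phi_\infty(y_\infty)|=1$. The main obstacle I expect is the alternative subcase $|y_i| \to \infty$, in which the $C^{2,\al}_{loc}$ limit near the origin becomes trivial while the supremum escapes to spatial infinity; I would handle it either via a Kelvin transform centered at the origin (which exchanges infinity with $0$, maps $Z$ to a conformally equivalent bubble, preserves the linearized equation, and allows the previous boundary-estimate-plus-kernel dichotomy to be re-run on the transformed functions), or by rescaling around $y_i$ and exploiting that $Z^{4/(n-2)}(y_i + \cdot) \to 0$, reducing the limit equation to $\Delta\widetilde\phi_\infty = 0$ and then obstructing bounded harmonic extensions by a barrier built from the boundary data at $|y|=\varepsilon_i^{-1}$. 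Once $\|\phi_i\|_{C^0(B_{\varepsilon_i^{-1}}(0))} \le C$ is established, $|v_i - Z| \le C\varepsilon_i^2$ on the same set is immediate from the definition of $\phi_i$.
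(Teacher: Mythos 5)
Your overall strategy mirrors the paper's: normalize the difference $v_i - Z$, pass to a limit of the linearized equation, and use the pointwise constraints $\widetilde\phi_i(0)=0=\nabla\widetilde\phi_i(0)$ (inherited from $v_i(0)=Z(0)=1$ and the local maximum at $0$) together with the nondegeneracy of the linearized bubble operator to force the limit to vanish. Indeed, writing $A_i := \max_{|y|\le\ve_i^{-1}}|v_i - Z|$, your $\widetilde\phi_i = (v_i - Z)/(M_i\ve_i^2) = (v_i-Z)/A_i$ is exactly the paper's $w_i$, so the normalizations coincide up to bookkeeping. The boundary estimate at $|y|=\ve_i^{-1}$ via the $(\varrho,L)$-decay and $n\ge 6$ is also the same device the paper uses in its case $|y_i| > \ve_i^{-1}/2$.

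The genuine gap is that you have no \emph{uniform-in-$i$ decay estimate} on $\widetilde\phi_i$. The paper proves, via the Taylor expansion giving $|a_i(y)|\le C(1+|y|)^{-4}$ and the Green's function representation, that
\begin{gather}
|w_i(y)| \le \frac{C}{1+|y|} + C\,\frac{\ve_i^2}{A_i}\qquad\text{for } |y|\le \tfrac{\ve_i^{-1}}{2},
\nonumber
\end{gather}
and this single inequality does two jobs that your argument currently cannot do. First, it shows $w_\infty(y)\to 0$ as $|y|\to\infty$, so that the Caffarelli--Gidas--Spruck rigidity can be applied in the form the paper actually cites (for \emph{decaying} solutions with $w_\infty(0)=0=\nabla w_\infty(0)$). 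You instead invoke a classification of merely \emph{bounded} kernel elements; this is a strictly stronger statement than what CGS provide, and while it is plausibly true, you do not prove or cite it, and the cleanest way to avoid needing it is precisely the decay estimate. Second, and more seriously, the decay estimate is what dispatches the regime where the maximizing point $y_i$ escapes to infinity while staying well inside $B_{\ve_i^{-1}}(0)$ (say $|y_i|\sim\ve_i^{-1/2}$): since $w_i(y_i)=1$ by construction and $\ve_i^2/A_i\to 0$, the bound forces $1\le C/(1+|y_i|)+o(1)\to 0$, a contradiction. Your two proposed substitutes for this step do not close the gap as stated: the Kelvin transform sketch would have to cope with the fact that the domain is $B_{\ve_i^{-1}}(0)$ rather than $\RR^n$ (the transform produces an exterior domain with a shrinking inner hole where boundary information is needed), and the rescaling-around-$y_i$ sketch only yields $\Delta\widetilde\phi_\infty=0$ with $|\widetilde\phi_\infty|\le 1$, which is perfectly compatible with a nonzero constant --- the ``barrier built from boundary data'' you allude to is essentially the missing Green's function estimate in disguise. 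In short: your proof is on the right track but incomplete; supplying the $(1+|y|)^{-1}$ decay bound on $\widetilde\phi_i$, uniformly in $i$, is what turns your sketch into the paper's proof and simultaneously removes the need for the stronger bounded-kernel classification.
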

\begin{proof}
 Set
\begin{gather}
 A_i = \max_{|y| \leq \, \ve_i^{-1}} |v_i - Z| = |v_i - Z|(y_i),
\nonumber
\end{gather}
where $y_i$ is defined by this relation as a point where the maximum of
$|v_i - Z|$ is achieved for $|y| \leq \, \ve_i^{-1}$. 
Suppose first that, up to a subsequence, 
$|y_i| \leq \frac{\ve_i^{-1}}{2}$.
If the result is not true, then
there exists a subsequence such that
\begin{gather}
 \frac{\ve_i^2}{A_i} \rar 0 \, \text{ as } \, i \rar \infty.
\label{if_not_true}
\end{gather}
Define $w_i(y) = A_i^{-1}( v_i - Z )(y)$. Then 
it satisfies
\begin{gather}
 \Delta w_i + a_i w_i = 
 \frac{\ve_i^2 }{A_i} \Big (   \widetilde{V} v_i 
 + \frac{ Q - \widetilde{q}_i }{\ve_i^2}  v_i^\frac{n+2}{n-2}  
 \Big ),
\label{equation_w_i}
\end{gather}
where
\begin{gather}
a_i = Q \frac{  v_i^\frac{n+2}{n-2} - Z^\frac{n+2}{n-2} }{v_i - Z}.
\nonumber
\end{gather}
By Taylor's theorem,
\begin{gather}
 v_i^\frac{n+2}{n-2} -  Z^\frac{n+2}{n-2} = \frac{n+2}{n-2} Z^\frac{4}{n-2}(v_i - Z) + O(Z^\frac{6-n}{n-2}|v_i - Z|^2).
\label{Taylor}
\end{gather}

From our hypotheses, expansion (\ref{Taylor}), the $(\varrho,L)$-decay of $u$,
lemma \ref{lemma_C_2_loc_v}, (\ref{if_not_true}) 
and equation (\ref{equation_w_i}), it follows by an argument similar to the proof
of lemma \ref{lemma_C_2_loc_v}, that 
$w_i$ is bounded in $C^2_{loc}(\RR^n)$ and 
\begin{gather}
|a_i(y)| \leq \frac{C}{(1 +|y|)^4},
\label{decay_a}
\end{gather}
for some constant $C>0$ independent of $i$.
Passing to a subsequence, we see that $w_i \rar w_\infty$ in $C^2_{loc}(\RR^n)$, and that 
$w_\infty$ satisfies
\begin{gather}
\Delta w_\infty + \frac{n+2}{n-2} Q Z^\frac{4}{n-2} w_\infty = 0.
\label{eq_w_infinity}
\end{gather}
On the other hand, using the Green's function for the Laplacian with Dirichlet
boundary condition, the representation formula and 
(\ref{decay_a}) show that for $|y| \leq \frac{\ve_i^{-1}}{2}$,
\begin{gather}
|w_i(y) | \leq \frac{C}{1+|y|} + C\frac{\ve_i^2}{A_i},
\label{decay_w_i}
\end{gather}
for some constant $C>0$ independent of $i$. In particular, $w_\infty$ has
the property that 
\begin{gather}
\lim_{|y|\rar \infty} w_\infty(y) = 0.
\label{limit_w}
\end{gather}
Up to a harmless rescaling, solutions to (\ref{eq_w_infinity}) with the property
(\ref{limit_w}) have also been studied by Caffarelli, Gidas and Spruck in \cite{CGS}.
They have the property that $w_\infty \equiv 0$ if
 $w_\infty(0) = 0 = |\nabla w_\infty(0)|$. Recalling that $v_i(0) = 1 $ and that
 $0$ is a local maximum of $v_i$, and noticing that $ Z(0) = 1$, $\nabla Z(0) = 0$,
 we see that $w_i(0) = 0 = |\nabla w_i(0)|$. Therefore, 
 $w_\infty(0) = 0 = |\nabla w_\infty(0)|$ holds, and hence  $w_\infty$ vanishes identically.
 Since $w_i(y_i) =1$ by construction, we must have $|y_i| \rar \infty$, but this 
 contradicts
 (\ref{decay_w_i}) because of (\ref{if_not_true}) and $w_i(y_i) = 1$.
 
It remains to prove the proposition in the case when 
 $|y_i| > \frac{\ve_i^{-1}}{2}$. Notice that from the $(\varrho,L)$-decay of
 $u$, we obtain that $v_i(y) \leq C |y|^\frac{2-n}{2}$ for $|y| \geq \varrho \ve_i^{-1}$,
 while $Z$ obeys the estimate $Z(y) \leq C(1+|y|)^{2-n}$.  From these we get 
 $|v_i - Z|(y_i) \leq C |y_i|^\frac{2-n}{2} \leq C \ve_i^2$ if 
 $|y_i| > \frac{\ve_i^{-1}}{2}$.
\end{proof}

\noindent \emph{Proof of theorem \ref{C_0_theorem}:} Notice that 
\begin{gather}
z_i(x) = \ve_i^\frac{2-n}{2} Z(\ve_i^{-1} x).
\nonumber
\end{gather}
Writing the estimate of  proposition \ref{main_prop} in $x$-coordinates
and recalling the definition of $\ve_i$, 
we obtain 
\begin{gather}
\p  (u_i(x_i))^{-1} u - (u_i(x_i))^{-1} z_i 
\p_{C^0(B_1(x_i ) ) } \leq C \ve_i^2.
\nonumber
\end{gather}
Since $x_i \rar \bar{x}$, choosing $\si >0$ small and $i$ large, we obtain the result.
\cqd

\noindent \emph{Proof of theorem \ref{C_2_theorem}:} For the $C^0$-norm
this is simply the estimate of proposition \ref{main_prop}
written in $x$-coordinates. From the $C^0$ bound, we obtain the $C^2$ bound
by standard elliptic estimates, after considering a smaller ball via
the introduction of $\eta$. \cqd

\noindent \emph{Proof of theorem \ref{a_priori_bound_theorem}:}
We shall first show that $\p u_i \p_{C^0(\RR^n)} \, \leq C$ for a constant
independent of $i$. If this is not the case, then we can find a sequence 
$u_i$ that blows up as in the assumptions of theorem \ref{C_0_theorem}.
In 
light of elliptic theory and shrinking $\eta$ if necessary, we obtain that 
the hypothesis on the $C^0$ decay of $v_i - Z$ then yields the estimate
\begin{gather}
\p v_i - Z \p_{C^2(B_{\la_i} (0) ) } \, \leq \, C \ve_i^{2+\de},
\nonumber
\end{gather}
which then gives
\begin{gather}
\ve_i^\frac{n-2}{2} \p \nabla^m(u_i - z_i) \p_{C^0(B_{\eta}(x_i))}\,  \leq C\ve_i^{2+\de-m}, \, m=0,1,2.
\label{der_estimates}
\end{gather}
Notice that
\begin{gather}
\ve_i^\frac{n-2}{2}  \Delta(u_i - z_i) + Q \ve_i^\frac{n-2}{2} 
\Big( u_i^\frac{n+2}{n-2} - z_i^\frac{n+2}{n-2} \Big)
+ \ve_i^\frac{n-2}{2} \Big( q_{u_i} - Q \Big ) u_i^\frac{n+2}{n-2} = \ve_i^\frac{n-2}{2}  V u_i.
\label{dif_equations}
\end{gather}
From (\ref{der_estimates}),  and the hypotheses on $q_{u_i}$  we conclude that 
\begin{gather}
\ve_i^\frac{n-2}{2} Q\Big( u_i^\frac{n+2}{n-2} - z_i^\frac{n+2}{n-2} \Big)(x_i)
+ \ve_i^\frac{n-2}{2} \Big( q_{u_i} - Q \Big )(x_i) u_i^\frac{n+2}{n-2}(x_i) \rar 0 \, \text{ as } i \rar \infty.
\label{limit_zero}
\end{gather}
(\ref{der_estimates}) also gives
\begin{gather}
\ve_i^\frac{n-2}{2} \Delta(u_i - z_i)(x_i) \rar 0  \, \text{ as } i \rar \infty.
\label{limit_zero_delta}
\end{gather}
But (\ref{limit_zero}) and (\ref{limit_zero_delta}) yield a contradiction with 
(\ref{dif_equations}) since 
\begin{gather}
\ve_i^\frac{n-2}{2} u_i(x_i) = 1 \, \text{ for every } \, i,
\nonumber
\end{gather}
and $V$ is bounded away from zero in $B_\varrho(0)$.

This establishes a bound on the $C^0$-norm of $u_i$. Restricting the
 problem
to $B_r(0)$ and using standard elliptic estimates, produces a bound on the
$C^2$-norm within $B_\varrho(0)$. With the $C^2$ bound at hand, we  write
\begin{gather}
u_i^\frac{n+2}{n-2} = u_i^\frac{4}{n-2} u_i,
\nonumber
\end{gather}
and consider  $u_i^\frac{4}{n-2} $ as a given coefficient. In this 
case, we can treat the equation as a linear equation (for $u_i$) for which 
the Harnack inequality can be applied, producing the desired 
bound from below on $u_i$. \cqd

\end{document}